\theoremstyle{definition}
\theoremstyle{remark}
\theoremstyle{plain}
\newtheorem{thm}{Theorem}
\newtheorem{lem}[thm]{Lemma}
\newcommand*{\house}[1]{%
	\mathord{%
		\mathpalette\@house{#1}%
	}%
}
\newcommand*{\@house}[2]{%
	% #1: math style
	% #2: expression that gets the "house"
	% get the line width of `\overline' in the current math font size
	\dimen@=\fontdimen8 %
	\ifx#1\scriptscriptstyle\scriptscriptfont
	\else\ifx#1\scriptstyle\scriptfont
	\else\textfont\fi\fi
	3 %
	\sbox0{%
		$#1%
		\vrule width\dimen@\relax
		\overline{%
			\kern2\dimen@
			\begingroup % to keep changes of \dimen@ in #2 local
			#2%
			\endgroup
			\kern2\dimen@
		}%
		\vrule width\dimen@\relax
		\mathsurround=1.5\dimen@ % outside margin
		$%
	}%
	% TeX adds an empty space above `\overline', it needs to be
	% removed to get the correct height for the `\vrule's
	\ht0=\dimexpr\ht0-\dimen@\relax
	\dp0=\dimexpr\dp0+2\dimen@\relax
	\vbox{%
		\kern\dimen@ % reinsert previously removed space
		\copy0 %
	}%
}
\definecolor{darkgreen}{rgb}{0,0.7,0}
\begin{document}
	
	\title{The $p$-adic Duffin--Schaeffer conjecture}
	
	\author{SIMON KRISTENSEN}
	
	\address{S. Kristensen, Department of Mathematics, Aarhus University, Ny Munkegade 118,
		DK-8000 Aarhus C, Denmark}
	
	\email{sik@math.au.dk}

	\author{MATHIAS L\O{}KKEGAARD LAURSEN}
	
	\address{M. L. Laursen, Department of Mathematics, Aarhus University, Ny Munkegade 118,
		DK-8000 Aarhus C, Denmark}
	
	\email{mll@math.au.dk}

	\begin{abstract}
		We prove Haynes' version of the Duffin--Schaeffer conjecture for the $p$-adic numbers. In addition, we prove several results about an associated related but false conjecture, related to $p$-adic approximation in the spirit of Jarn\'ik and Lutz.
	\end{abstract}
	
	\dedicatory{For Asmus Schmidt -- in memoriam}
	\thanks{Research supported by the Independent Research Fund Denmark (Grant ref. 1026-00081B)}
	
	\maketitle
	
	\section{Introduction}
	
	A classical set to study in the metric theory of Diophantine approximation is the set
	\begin{equation}
		\label{eq:main}
		\mathcal{A}(\psi) = \left\{ x \in [0,1] : \left \vert x - \frac{a}{n} \right\vert < \frac{\psi(n)}{n} \text{ for infinitely many } \frac{a}{n} \in \mathbb{Q} \right\},
	\end{equation}
	where $\psi: \mathbb{N} \rightarrow [0,\infty)$ is some function and the rationals $a/n$ are assumed to be on lowest terms. The Duffin--Schaeffer conjecture \cite{MR4859} states that the Lebesgue measure of the set $\mathcal{A}(\psi)$ is 
	\begin{equation}
		\label{eq:DS}
		\vert \mathcal{A}(\psi) \vert = 
		\begin{cases} 
			0 & \text{if } \sum_{n=1}^\infty \frac{\psi(n) \phi(n)}{n} < \infty, \\
			1 & \text{if } \sum_{n=1}^\infty \frac{\psi(n) \phi(n)}{n} =\infty,
		\end{cases}
	\end{equation}
	where $\phi$ denotes the Euler totient function. The convergence case is a simple application of the Borel--Cantelli lemma, and the divergence case is the main problem in the conjecture. The conjecture was recently settled in the affirmative by Koukoulopoulos and Maynard \cite{MR4125453}.

	The main novelty of the Duffin--Schaeffer conjecture is that it removes a monotonicity condition from a celebrated theorem of Khintchine \cite{MR1512207}, which states that if $\psi$ is monotonic, one need only consider the series $\sum_{n=1}^\infty \psi(n)$ to obtain the required conclusion. In fact, Khintchine studied the set without the requirement that the approximating rationals are reduced, though this is not important in the case when $\psi$ is monotonic. It was observed by Duffin and Schaeffer that the monotonicity condition cannot be removed from Khintchine's theorem. They suggested that one could modify the divergence condition to the one above in order to get a result without the monotonicity condition. 
	
	Removing the coprimality condition on $a$ and $n$ results in a substantially different statement when $\psi$ is not decreasing. This led Catlin \cite{MR417098} to formulate a conjecture in this case, which was also settled in the affirmative by Koukoulopoulos and Maynard \cite{MR4125453}.
	
	In \cite{MR2576282}, Haynes gave a correspondence between a $p$-adic version of the conjecture and the real variable version. We will state a precise version of his result later, but briefly it states that if the Duffin--Schaeffer conjecture can be proved by a certain method, a $p$-adic analogue would follow. Conversely, if one could prove the $p$-adic by the same method, an almost complete Duffin--Schaeffer conjecture would follow. 
	
	Of course, in view of \cite{MR4125453} the second implication is no longer relevant. The converse implication, however, appears not to have been explored. It is the purpose of this note to show how the proof of the $p$-adic Duffin--Schaeffer conjecture can be deduced from the work of Haynes together with a mild strengthening of the methods of Koukoulopoulos and Maynard.

	Throughout, we will implicitly use the letter $p$ for a prime number. We will denote by $\nu_p$ the $p$-adic valuation, $\vert \cdot \vert_p$ the usual $p$-adic absolute value, by $\mathbb{Z}_p$ the $p$-adic integers, and by $\mu_p$ the Haar measure on the $p$-adic numbers, normalised so that $\mu_p(\mathbb{Z}_p)=1$. We will denote the greatest common divisor of $n,m \in \mathbb{N}$ by $\gcd(m,n)$. We will also use the Vinogradov notation, so that for two real quantities, $f$ and $g$, we will say that $f \ll g$ if there exists a constant $C > 0$ such that $f \le Cg$. In Landau's $O$-notation, which will also be used, this amounts to saying that $f = O(g)$.
	
	\section{Statement of main results}
	
	Let $p$ be a prime, let $\psi: \mathbb{N} \rightarrow \mathbb{R}_{\ge 0}$ be a function, and define for each natural number $n \in \mathbb{N}$ the set
	$$
	\mathcal{A}_n^p(\psi) = \bigcup_{\substack{-n \le a \le n \\ \gcd(a,n) =1}} \left\{x \in \mathbb{Z}_p : \left\vert x - \frac{a}{n} \right\vert_p \leq \frac{\psi(n)}{n} \right\}.
	$$
	Also, let
	\begin{alignat*}{2}
		\mathcal{A}^p(\psi) &= \limsup \mathcal{A}_n^p(\psi) \\
		&= \left\{x \in \mathbb{Z}_p : x \in \mathcal{A}_n^p \text{ for infinitely many } n\in \mathbb{N}\right\}.
	\end{alignat*}

	The $p$-adic Duffin--Schaeffer conjecture as stated by Haynes \cite{MR2576282} is the following Theorem.
	
	\begin{thm}
		\label{thm:main1}
		For any prime $p$ and any function $\psi: \mathbb{N} \rightarrow \mathbb{R}_{\ge 0}$,
		\begin{equation*}
			\mu_p(\mathcal{A}^p(\psi)) = \begin{cases}
				0 & \text{if } \sum_{n=1}^\infty \mu_p(\mathcal{A}_n^p) < \infty, \\
				1 & \text{if } \sum_{n=1}^\infty \mu_p(\mathcal{A}_n^p) = \infty.
			\end{cases}
		\end{equation*} 
	\end{thm}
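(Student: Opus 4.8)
The convergence half is nothing but the Borel--Cantelli lemma: if $\sum_{n}\mu_p(\mathcal{A}_n^p(\psi))<\infty$ then $\mu_p(\limsup_n\mathcal{A}_n^p(\psi))=0$. So the whole of the work is the divergence half, and there I would proceed in two stages: a reduction to a purely real-variable statement via Haynes' correspondence, and then a proof of that statement by the Koukoulopoulos--Maynard machinery.

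\textbf{Stage 1: reductions and translation.} First I would clear away the degenerate behaviour. A short computation with $p$-adic valuations shows that for $p\mid n$ the set $\mathcal{A}_n^p(\psi)$ is either all of $\mathbb{Z}_p$ or empty, and that even for $p\nmid n$ it is all of $\mathbb{Z}_p$ once $\psi(n)\ge n$; if $\mathcal{A}_n^p(\psi)=\mathbb{Z}_p$ for infinitely many $n$ the theorem is immediate, and otherwise one may assume (without changing $\mathcal{A}^p(\psi)$ or the divergence of the series) that $\psi$ is supported on integers coprime to $p$, with $\psi(n)/n\to 0$ along the support. For such $n$ the inequality $|x-a/n|_p\le\psi(n)/n$ is equivalent to a congruence $x\equiv a\,\overline{n}\pmod{p^{\ell(n)}}$ with $p^{-\ell(n)}\asymp\psi(n)/n$ and $\overline{n}n\equiv 1$; thus $\mathcal{A}_n^p(\psi)$ is a union of residue classes modulo $p^{\ell(n)}$ and $\mu_p(\mathcal{A}_n^p(\psi))\asymp\min(1,\phi(n)\psi(n)/n)$, matching the Lebesgue-measure order of the classical sets $\mathcal{A}_n(\psi)$. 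At this point I would invoke Haynes' correspondence (recorded below): it identifies this $p$-adic $\limsup$ set with a real-variable $\limsup$ set assembled from the same arithmetic data, reducing the problem to showing that the associated real set has positive Lebesgue measure whenever $\sum_n\mu_p(\mathcal{A}_n^p(\psi))=\infty$.

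\textbf{Stage 2: the Koukoulopoulos--Maynard estimates and a zero--one law.} Here the plan is to run the Koukoulopoulos--Maynard argument --- the GCD-graph apparatus together with its anatomy-of-integers input --- not on the classical sets but on the family produced in Stage 1, whose denominators are restricted to residues coprime to $p$ and whose radii have been truncated to the powers $p^{-\ell(n)}$. The point to verify is that neither modification damages the key quasi-independence-on-average bound $\sum_{m,n\le N}\mu(E_m\cap E_n)\ll\big(\sum_{n\le N}\mu(E_n)\big)^2$ that drives their proof of positivity; I expect this to survive essentially verbatim --- hence a \emph{mild} strengthening of the method rather than a new idea --- since the relevant counting estimates are insensitive to a single fixed prime and to replacing a radius by a comparable one. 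Once positivity is in hand, a zero--one law in the spirit of Gallagher finishes the argument: because each $\mathcal{A}_n^p(\psi)$ is a union of congruence classes, the set $\mathcal{A}^p(\psi)$ has (on average, at fine scales) the same density in every ball of $\mathbb{Z}_p$, so by the Lebesgue density theorem in $\mathbb{Z}_p$ its measure lies in $\{0,1\}$; combined with positivity this yields $\mu_p(\mathcal{A}^p(\psi))=1$.

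I expect the main obstacle to be exactly the bookkeeping in Stage 2: propagating the hypothesis $\sum_n\mu_p(\mathcal{A}_n^p(\psi))=\infty$ faithfully through Haynes' correspondence, and then confirming that the overlap/GCD-graph estimates of Koukoulopoulos and Maynard --- and the companion zero--one law --- genuinely go through for the denominator set and truncated radii dictated by the $p$-adic problem, rather than merely for the classical Duffin--Schaeffer configuration.
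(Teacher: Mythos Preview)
Your plan is correct and follows the same route as the paper: reduce via Haynes, then verify quasi-independence on average with the Koukoulopoulos--Maynard machinery. The execution, however, is cleaner than you anticipate. Haynes' result (Theorem~\ref{thm:Haynes}) is already a complete black box --- $(\text{QIA}^\infty,\psi)$ for \emph{every} $\psi:\mathbb{N}\to[0,\tfrac12]$ in the \emph{real} setting implies the $p$-adic conjecture outright --- so none of your Stage~1 reductions (support coprime to $p$, truncation of radii to powers of $p$, restricted denominators) and no separate Gallagher-type zero--one argument are needed; all of that is already absorbed inside Haynes' theorem. The one place a ``mild strengthening'' of \cite{MR4125453} is genuinely required is purely on the real side and is not the issue you flagged: Proposition~5.4 of \cite{MR4125453} is stated for an interval $[X,Y]$ with $\sum_{X\le n\le Y}\phi(n)\psi(n)/n\in[1,2]$, whereas $(\text{QIA}^\infty,\psi)$ demands the overlap bound over $[1,N_K]$ where that sum is of size $K$. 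The paper handles this by the one-line rescaling $\psi\mapsto\psi/K$ (Lemma~\ref{lem:lemma1}), after which the Pollington--Vaughan overlap estimate, Mertens' bound, and the dyadic-in-$j$ decomposition from \cite{MR4125453} run verbatim with $K^2$ replacing the bound $O(1)$ throughout.
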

	
	Note that some restriction on the numerators in the definition of the sets $\mathcal{A}_n^p$ is certainly required. Indeed, with unrestricted numerators, fix an $x \in \mathbb{Z}_p$ and a denominator $n \in \mathbb{N}$ and express the $p$-adic integer $nx$ as a series
	$$
	nx = \sum_{i=0}^\infty \gamma_i p^i.
	$$
	Then, for a natural number $N \in \mathbb{N}$, on letting 
	$$
	a_N = \sum_{i=0}^N \gamma_i p^i,
	$$
	we would have
	$$
	\left \vert x - \frac{a_N}{n} \right\vert_p = \vert n \vert_p^{-1} \left\vert \sum_{i=N+1}^\infty \gamma_i p^i\right\vert_p \le \vert n \vert_p^{-1} p^{-N-1}.
	$$
	This can be made arbitrarily small by increasing $N$, so that without the restriction on the numerators, the set would always have full Haar measure, unless of course $\psi(n) = 0$ for all but finitely many values of $n$.
	
	The statement of Theorem \ref{thm:main1} looks a little curious in comparison with the real Duffin--Schaeffer conjecture. Most notably, the Euler totient function is absent from the series governing the measure of the set $\mathcal{A}^p$. However, it is implicitly present in the definition of the sets $\mathcal{A}_n^p$. It can be brought back in the statement by giving equal weight to denominators and numerators, thus changing the conjecture slightly. We define sets
	\begin{alignat*}{2}
		\mathcal{B}_n^p(\psi) = &\bigcup_{\substack{-n \le a \le n \\ \gcd(a,n) =1}} \left\{x \in \mathbb{Z}_p : \left\vert x - \frac{a}{n} \right\vert_p \le \frac{\psi(n)}{n} \right\} \\
		&\cup \bigcup_{\substack{-n \le a \le n \\ \gcd(a,n) =1}} \left\{x \in \mathbb{Z}_p : \left\vert x - \frac{n}{a} \right\vert_p \le \frac{\psi(n)}{n} \right\},
	\end{alignat*}
	and as before let 
	$$
	\mathcal{B}^p(\psi) = \limsup \mathcal{B}_n^p(\psi).
	$$
	This setup of the problem is closer to the original setup for $p$-adic Diophantine approximation, as it was introduced by Jarn\'ik \cite{MR15092} and extensively developed by Lutz \cite{MR0069224}. We will show the following.
	
	\begin{thm}
		\label{thm:main2}
		For any prime $p$ and any function $\psi: \mathbb{N} \rightarrow \mathbb{R}_{\ge 0}$ with $\psi(n)=0$ whenever $p \vert n$,
		\begin{equation*}
			\mu_p(\mathcal{A}^p(\psi)) = \mu_p(\mathcal{B}^p(\psi)) = \begin{cases}
				0 & \text{if } \sum_{n=1}^\infty \frac{\phi(n)\psi(n)}{n} < \infty, \\
				1 & \text{if } \sum_{n=1}^\infty \frac{\phi(n)\psi(n)}{n} = \infty.
			\end{cases}
		\end{equation*} 
	\end{thm}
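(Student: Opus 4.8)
The plan is to deduce Theorem~\ref{thm:main2} from Theorem~\ref{thm:main1} by showing that the two series $\sum_n\mu_p(\mathcal A_n^p(\psi))$ and $\sum_n\phi(n)\psi(n)/n$ converge or diverge together; the set $\mathcal B^p$ will then come along essentially for free. The hypothesis $\psi(n)=0$ for $p\mid n$ is used only to ensure that the terms with $p\mid n$ contribute nothing to either side. The starting point is elementary $p$-adic geometry: for $p\nmid n$ and $\gcd(a,n)=1$ one has $a/n\in\mathbb Z_p$, the set $\{x\in\mathbb Z_p:|x-a/n|_p\le\psi(n)/n\}$ is a coset of $p^{k_n}\mathbb Z_p$, where $p^{-k_n}$ is the largest power of $p$ not exceeding $\psi(n)/n$ (so $p^{-k_n}\asymp_p\min\{1,\psi(n)/n\}$), and, since the multiplication-by-$n$ map is a measure-preserving automorphism of the group $\mathbb Z_p$ carrying this ball to $a+p^{k_n}\mathbb Z_p$, two such balls (for $a$ and $a'$) coincide precisely when $a\equiv a'\pmod{p^{k_n}}$ and are disjoint otherwise. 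Hence, for $n\ge 2$ with $p\nmid n$,
\[
\mu_p(\mathcal A_n^p(\psi))=D_n\,p^{-k_n},\qquad D_n:=\#\{\,a\bmod p^{k_n}:\ |a|\le n,\ \gcd(a,n)=1\,\},
\]
and since $D_n\le 2\phi(n)$ this already gives $\mu_p(\mathcal A_n^p(\psi))\le 2\phi(n)\psi(n)/n$; likewise $\mathcal B_n^p(\psi)$ contributes at most $2\phi(n)$ further balls $B(n/a,\psi(n)/n)$, those with $p\mid a$ being empty (as then $n/a\notin\mathbb Z_p$ and $|x-n/a|_p\ge p$ for $x\in\mathbb Z_p$) provided $\psi(n)<pn$, so $\mu_p(\mathcal B_n^p(\psi))\le 4\phi(n)\psi(n)/n$.

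The convergence case is then immediate: if $\sum_n\phi(n)\psi(n)/n<\infty$ then, after discarding the finitely many $n$ with $\psi(n)\ge pn$ (which changes neither $\mathcal A^p(\psi)$ nor $\mathcal B^p(\psi)$), both $\sum_n\mu_p(\mathcal A_n^p(\psi))$ and $\sum_n\mu_p(\mathcal B_n^p(\psi))$ converge, and the convergence half of the Borel--Cantelli lemma yields $\mu_p(\mathcal A^p(\psi))=\mu_p(\mathcal B^p(\psi))=0$.

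For the divergence case the crucial ingredient I would establish is the matching lower bound
\[
\mu_p(\mathcal A_n^p(\psi))\ \gg_p\ \min\Bigl\{1,\tfrac{\phi(n)\psi(n)}{n}\Bigr\}\qquad(p\nmid n,\ n\ \text{large}),
\]
which, via the displayed formula and $p^{-k_n}\asymp_p\min\{1,\psi(n)/n\}$, reduces to the combinatorial claim $D_n\gg\min\{\phi(n),p^{k_n}\}$. To prove this I would put $q=p^{k_n}$, write $N_b$ for the number of admissible $a$ with $a\equiv b\pmod q$ (so $\sum_bN_b=2\phi(n)$), and apply Cauchy--Schwarz: $D_n=\#\{b:N_b\ge1\}\ge(\sum_bN_b)^2/\sum_bN_b^2=4\phi(n)^2/\sum_bN_b^2$. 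The diagonal of $\sum_bN_b^2$ contributes $2\phi(n)$; writing $a'=a+qm$ with $0<|m|<2n/q$ in the off-diagonal and bounding, for each $m$, the number of $a$ in an interval of length $\le 2n$ with $a(a+qm)$ coprime to $n$ by inclusion--exclusion over the prime divisors of $n$ (this is exactly where $\gcd(q,n)=1$ is needed), a routine sieve computation gives an off-diagonal contribution $\ll \phi(n)^2/q+n^{1+o(1)}/q$, so that $\sum_bN_b^2\ll\phi(n)+\phi(n)^2/q+n^{1+o(1)}/q$. Using $\phi(n)\ge n^{3/5}$ for large $n$, one then checks in both ranges $q\le\phi(n)$ and $q>\phi(n)$ that $D_n\gg\phi(n)^2/\sum_bN_b^2\gg\min\{\phi(n),q\}$, as required.

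With this in hand the divergence case is quick: assuming $\sum_n\phi(n)\psi(n)/n=\infty$, the two bounds on $\mu_p(\mathcal A_n^p(\psi))$ together with the elementary equivalence $\sum_na_n=\infty\iff\sum_n\min\{1,a_n\}=\infty$ (for $a_n\ge0$) give $\sum_n\mu_p(\mathcal A_n^p(\psi))=\infty$, whence $\mu_p(\mathcal A^p(\psi))=1$ by Theorem~\ref{thm:main1}; and since $\mathcal B_n^p(\psi)\supseteq\mathcal A_n^p(\psi)$ one also gets $\mathcal B^p(\psi)\supseteq\mathcal A^p(\psi)$ and hence $\mu_p(\mathcal B^p(\psi))=1$. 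The one step demanding genuine work is the lower bound $D_n\gg\min\{\phi(n),p^{k_n}\}$: a naive pigeonhole argument only delivers $D_n\gg\phi(n)p^{k_n}/n$, which is too weak precisely when $p^{k_n}$ is close to $n$, and it is the second-moment sieve estimate above — together with pinning down the admissible range of $n$ and the implied constants — that supplies the correct order of magnitude. Everything else here is either formal or a direct appeal to Theorem~\ref{thm:main1}.
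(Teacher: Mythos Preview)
Your proposal is correct and shares the paper's overall architecture: in both arguments the convergence case is Borel--Cantelli via $\mu_p(\mathcal B_n^p)\ll\phi(n)\psi(n)/n$, and the divergence case is reduced, through the inclusion $\mathcal A^p(\psi)\subseteq\mathcal B^p(\psi)$ and Theorem~\ref{thm:main1}, to showing $\sum_n\mu_p(\mathcal A_n^p(\psi))=\infty$, which in turn hinges on a lower bound for $\mu_p(\mathcal A_n^p(\psi))$ comparable to $\phi(n)\psi(n)/n$.

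Where you diverge from the paper is in how that lower bound is obtained. The paper simply invokes \cite[Lemma~4]{MR2576282}, which gives $\mu_p(\mathcal A_n^p(\psi))\ge\phi(n)\psi(n)/n$ whenever $\psi(n)<n/(12\phi(n))$, and disposes of the complementary range by observing that each such term contributes at least $1/12$. You instead prove the lower bound from scratch: after identifying $\mu_p(\mathcal A_n^p(\psi))=D_n\,p^{-k_n}$, you establish $D_n\gg\min\{\phi(n),p^{k_n}\}$ via Cauchy--Schwarz and a second-moment sieve on the distribution of reduced residues in arithmetic progressions modulo $p^{k_n}$. This is a genuine (and clean) alternative route to exactly the same inequality; it is more self-contained, at the cost of the sieve bookkeeping you flag. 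One point to tidy in a full write-up: your error term ``$n^{1+o(1)}/q$'' needs the companion error from the incomplete sum $\sum_{|m|<2n/q}g(m)$ to be tracked as well, but a short computation shows that this extra error is $O(\phi(n))$ (the product $\prod_{\ell\mid n}\tfrac{\ell-1}{\ell-2}$ telescopes against the main factor), so the Cauchy--Schwarz conclusion goes through in both ranges $q\le\phi(n)$ and $q>\phi(n)$ exactly as you outline.
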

	
	In this case, the restriction on $\psi$ is needed, since otherwise the measure of the set need not satisfy a zero--one law. This was observed by Haynes \cite{MR2576282} by explicitly choosing a function such that the measure of the resulting set is $p^{-1}$. A simple modification of his example easily gives rise to functions such that $\mu_p(\mathcal{B}^p(\psi)) = p^{-k}$ for arbitrary values of $k$. It is tempting to believe that these are the only possible values for the Haar measure of $\mathcal{B}^p(\psi)$, but this is not the case. Indeed, we have the following.
	
	\begin{thm}
		\label{thm:main3}
		With no restrictions on the function $\psi: \mathbb{N} \rightarrow \mathbb{R}_{\ge 0}$, the Haar measure of $\mathcal{B}^p(\psi)$ can attain an uncountable number of values, with $1$ being the only possible value above $1/p$. If $p = 2$, the Haar measure of $\mathcal{B}^2(\psi)$ may attain any value in $[0, \frac{1}{2}]\cup\{1\}$.
	\end{thm}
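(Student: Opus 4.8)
The plan is to prove Theorem~\ref{thm:main3} in two halves: a family of explicit constructions realising the stated measures, and a ``gap'' statement showing that no value in $(1/p,1)$ is attained.

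\emph{Constructions.} The basic computation is that for $n=p^kq$ with $k\ge 1$, $q$ a prime distinct from $p$, and $\psi(n)$ small (say $\psi(n)=1$), one has $\mathcal{B}_n^p(\psi)=p^k\mathbb{Z}_p^*$ as soon as $n$ is large enough: the balls about $a/n$ are empty because $\gcd(a,n)=1$ forces $\nu_p(a/n)=-k<0$, while the balls about $n/a$ are centred in $p^k\mathbb{Z}_p^*$ with radius $<p^{-k}$, and as $a$ runs through the residues coprime to $n$ the centres $n/a$ equidistribute over $p^k\mathbb{Z}_p^*$ modulo the radius (one uses $\phi(q)/q=1-1/q\to 1$ to keep the relevant sieve count bounded below). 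Fixing $K\subseteq\{1,2,\dots\}$ and taking $\psi$ supported on all such $n$ with $k\in K$ (and $\psi=0$ elsewhere) then gives $\mathcal{B}^p(\psi)=\bigsqcup_{k\in K}p^k\mathbb{Z}_p^*$, of measure $(1-1/p)\sum_{k\in K}p^{-k}$. As $K$ varies this is an uncountable set of values lying in $[0,1/p]$, with $1/p$ attained by $K=\{1,2,\dots\}$; adjoining $\psi\equiv 0$ and $\psi(n)=n$ (which forces $\mathcal{B}^p(\psi)=\mathbb{Z}_p$) settles the first sentence. When $p=2$ the assignment $K\mapsto\sum_{k\in K}2^{-k-1}$ surjects onto $[0,\frac{1}{2}]$, and together with the gap statement this pins the value set down to $[0,\frac{1}{2}]\cup\{1\}$.

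\emph{The gap.} Decompose $\mathbb{Z}_p\setminus\{0\}=\bigsqcup_{k\ge 0}p^k\mathbb{Z}_p^*$ and analyse each $\mathcal{B}^p(\psi)\cap p^k\mathbb{Z}_p^*$ separately. First dispose of the trivial cases: if $\psi(n)\ge n$ for infinitely many $n$ divisible by $p$, or $\psi(n)\ge n/p$ for infinitely many $n$ coprime to $p$, then $\mathcal{B}^p(\psi)=\mathbb{Z}_p$ outright. Otherwise, for all large $n$ divisible by $p$ the set $\mathcal{B}_n^p(\psi)$ lies in $p\mathbb{Z}_p$ (the $a/n$ balls are empty, the $n/a$ balls are centred in $p\mathbb{Z}_p$ with radius $<1$), so the unit part $\mathcal{B}^p(\psi)\cap\mathbb{Z}_p^*$ coincides with $\mathcal{B}^p(\psi')\cap\mathbb{Z}_p^*$ for $\psi'=\psi\cdot\mathbf{1}_{p\nmid n}$, and Theorem~\ref{thm:main2} gives $\mu_p(\mathcal{B}^p(\psi)\cap\mathbb{Z}_p^*)\in\{0,\ 1-1/p\}$ according as $\sum_{p\nmid n}\phi(n)\psi(n)/n$ converges or diverges. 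For $k\ge 1$ and $\psi(n)/n$ small (the only case requiring work), the contribution of $\mathcal{B}_n^p(\psi)$ to $p^k\mathbb{Z}_p^*$, rescaled by $p^{-k}$, is a union of balls of radius $p^k\psi(n)/n$ about the points $b/n$ with $\gcd(b,pn)=1$ and $|b|\le n/p^k$, together with the $n/a$ terms when $\nu_p(n)=k$: a $\limsup$ family of the same type as the one governing $\mathcal{A}^p$, but with a truncated numerator range and an extra coprimality-to-$p$ condition. The mild strengthening of the Koukoulopoulos--Maynard method that already underlies Theorem~\ref{thm:main2} should cover this family too and yield $\mu_p(\mathcal{B}^p(\psi)\cap p^k\mathbb{Z}_p^*)\in\{0,\ p^{-k}(1-1/p)\}$.

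Finally one must link these dichotomies across levels. If the unit part is null then $\mathcal{B}^p(\psi)\subseteq p\mathbb{Z}_p$ up to a null set, so $\mu_p(\mathcal{B}^p(\psi))\le 1/p$; if it is full, the claim is that every $p^k\mathbb{Z}_p^*$-part is then full as well, forcing $\mu_p(\mathcal{B}^p(\psi))=1$. The mechanism is a bookkeeping identity: shrinking the numerator range by a factor $p^k$ is precisely offset by dilating the radii by $p^k$, so the rescaled level-$k$ family carries mass $\asymp\sum_{p\nmid n}\phi(n)\psi(n)/n$ --- the same series that controls the unit part --- and essentially the same overlap structure, whence divergence, and with it fullness, propagates from $k=0$ to every $k\ge 1$. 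I expect this propagation step, and in particular the verification that the Koukoulopoulos--Maynard quasi-independence estimates survive the passage to the truncated, $p$-restricted families and combine correctly with the rescaling, to be the main obstacle; the constructions and the decomposition into valuation shells are routine.
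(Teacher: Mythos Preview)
Your constructions are essentially the paper's: the authors take $\psi_k(n)=n/p^{k+1}$ when $\nu_p(n)=k$ and $0$ otherwise, so that the balls have radius exactly $p^{-k-1}$ and the $p-1$ centres $p^kq/a$ for $a=1,\dots,p-1$ already cover $p^k\mathbb{Z}_p\setminus p^{k+1}\mathbb{Z}_p$ with no equidistribution needed. Your choice $\psi(n)=1$ gives much smaller balls and forces the sieve/equidistribution step you sketch; this can be made to work, but it is an unnecessary complication compared with the paper's larger radii.

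For the gap, your core idea is the paper's (reduce to $\psi'=\psi\cdot\mathbf{1}_{p\nmid n}$ via the observation that, outside the trivial case, the $p\mid n$ contributions are trapped in $p\mathbb{Z}_p$, then invoke Theorem~\ref{thm:main2}). But you have manufactured an obstacle that is not there. Theorem~\ref{thm:main2} gives $\mu_p(\mathcal{B}^p(\psi'))\in\{0,1\}$ on \emph{all} of $\mathbb{Z}_p$, not merely on $\mathbb{Z}_p^*$. In the divergent case you therefore have $\mu_p(\mathcal{B}^p(\psi'))=1$, and since $\psi'\le\psi$ pointwise, $\mathcal{B}^p(\psi')\subseteq\mathcal{B}^p(\psi)$, so $\mu_p(\mathcal{B}^p(\psi))=1$ immediately. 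There is no propagation to carry out, no level-$k$ zero--one law to prove, and no need to push the Koukoulopoulos--Maynard estimates through truncated $p$-restricted families. The paper's Lemma~\ref{lem:zero-one} runs the same argument contrapositively: assuming $\mu_p(\mathcal{B}^p(\psi))>1/p$ and $\mu_p(\mathcal{A}^p(\psi))=0$ (Haynes' zero--one law), Theorem~\ref{thm:main2} kills the $p\nmid n$ part, so the $p\mid n$ part has measure $>1/p$; picking a witness $\alpha\in\mathbb{Z}_p^*$ in that $\limsup$ and applying the ultrametric inequality forces $\psi(n)\ge n$ along a subsequence of $p\mid n$, i.e.\ exactly your ``trivial case''. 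The two arguments are the same up to organisation; the whole shell-by-shell programme for $k\ge1$ can be deleted.
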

	
	\section{Auxillary results}
	
	In this section, we cite the results from \cite{MR2576282}, \cite{MR4125453} and \cite{MR1099767}, which we will need in order to prove our main results. The key property from \cite{MR2576282} needed in the property of \emph{quasi-independence on average for $\psi$ in $\mathbb{R}$}. We define sets of real numbers corresponding to our sets $\mathcal{A}^p_n(\psi)$,
	$$
	\mathcal{A}^\infty_n(\psi) = \bigcup_{\substack{a=1 \\ (a,n) = 1}}^n \left[\frac{a}{n} - \psi(n), \frac{a}{n} + \psi(n)\right].
	$$
	Note that we explicitly remove negative numerators here, as the real analogue of $\mathbb{Z}_p$ is the unit interval $[0,1]$. We will say that the function $\psi$ satisfies the property of quasi-independence on average in $\mathbb{R}$, $(\text{QIA}^\infty,\psi)$ for short, if
	\begin{equation}
		\label{eq:QIA}
		\limsup_{N \rightarrow \infty} \left(\sum_{n \le N} \lambda(\mathcal{A}^\infty_n(\psi))\right)^2 \left(\sum_{n,m \le N} \lambda(\mathcal{A}^\infty_n(\psi) \cap \mathcal{A}^\infty_m(\psi))\right)^{-1} > 0.
	\end{equation}
	Here and elsewhere, $\lambda$ denotes the Lebesgue measure.
	
	The result of Haynes, which we will require and whose statement has been slightly modified here, is the following.
	
	\begin{thm}[Theorem 2 of \cite{MR2576282}]
		\label{thm:Haynes}
		If $(\text{QIA}^\infty,\psi)$ is satisfied for any functions $\psi: \mathbb{N} \rightarrow [0,\frac12]$, Theorem \ref{thm:main1} is true.
	\end{thm}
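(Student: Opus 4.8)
The plan is to reduce the $p$-adic zero--one law of Theorem~\ref{thm:main1} to a statement on the unit interval for a suitable dual function, where the hypothesis $(\text{QIA}^\infty,\cdot)$ applies, and then to extract full measure from a divergence Borel--Cantelli lemma together with a zero--one law.

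\textbf{Reductions and the dual function.} The convergence case is the first Borel--Cantelli lemma, so suppose $\sum_n\mu_p(\mathcal{A}_n^p(\psi))=\infty$; the goal is $\mu_p(\mathcal{A}^p(\psi))=1$. Discard every $n$ with $\mathcal{A}_n^p(\psi)\in\{\varnothing,\mathbb{Z}_p\}$; if the latter occurs infinitely often the conclusion is immediate, and the former are harmless. Since for $p\mid n$ and reduced $a/n$ one has $|x-a/n|_p=p^{\nu_p(n)}$ for every $x\in\mathbb{Z}_p$, the surviving indices all satisfy $p\nmid n$ and $\psi(n)<n$. Put $k(n):=\lceil\log_p(n/\psi(n))\rceil$, the least integer with $p^{-k(n)}\le\psi(n)/n$; then $k(n)\ge1$, so $\tilde\psi(n):=p^{-k(n)}$ takes values in $[0,\tfrac12]$, and $(\text{QIA}^\infty,\tilde\psi)$ holds by hypothesis.

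\textbf{The dictionary.} Because $|\cdot|_p$ is an ultrametric and $p\nmid n$, the set $\mathcal{A}_n^p(\psi)$ is a disjoint union of balls indexed by the residue classes mod $p^{k(n)}$ met by some $a\in[-n,n]$ with $\gcd(a,n)=1$, each of $\mu_p$-measure $p^{-k(n)}$; when $p^{k(n)}>2n$ there are exactly $2\phi(n)$ such classes, while $\mathcal{A}_n^\infty(\tilde\psi)$ is then a disjoint union of $\phi(n)$ intervals of length $2p^{-k(n)}$, so $\mu_p(\mathcal{A}_n^p(\psi))=\lambda(\mathcal{A}_n^\infty(\tilde\psi))$; in the remaining ``saturated'' range $p^{k(n)}\le2n$ an elementary overlap count still gives $\mu_p(\mathcal{A}_n^p(\psi))\asymp\lambda(\mathcal{A}_n^\infty(\tilde\psi))$ with absolute constants. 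The substantive step is the matching bound for pairwise intersections,
\[
\sum_{n,m\le N}\mu_p\bigl(\mathcal{A}_n^p(\psi)\cap\mathcal{A}_m^p(\psi)\bigr)\ \ll\ \sum_{n,m\le N}\lambda\bigl(\mathcal{A}_n^\infty(\tilde\psi)\cap\mathcal{A}_m^\infty(\tilde\psi)\bigr)\ +\ O\Bigl(\sum_{n\le N}\mu_p(\mathcal{A}_n^p(\psi))\Bigr).
\]
For $k(n)\ge k(m)$ the ultrametric makes every ball constituting $\mathcal{A}_n^p(\psi)$ either lie inside or be disjoint from $\mathcal{A}_m^p(\psi)$, so the left side equals $\sum_{n,m}p^{-k(n)}\cdot\#\{a\bmod p^{k(n)}\ \text{reduced}:\ p^{k(m)}\mid am-bn\ \text{for some reduced}\ b\in[-m,m]\}$, a count governed by the linear congruence $am\equiv bn\pmod{p^{k(m)}}$; the real intersection, by disjointness within each set, unfolds into a sum over reduced pairs $(a,b)$ with $|am-bn|\le nm(\tilde\psi(n)+\tilde\psi(m))$ of overlap lengths of size at most $2p^{-k(n)}$. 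Reconciling these --- via the trivial bound $|am-bn|\le2nm$, a divisor estimate for the number of $(a,b)$ with $am-bn$ a prescribed multiple of $p^{k(m)}$ (bringing in $\gcd(n,m)$), the coprimality conditions, and the symmetry $x\mapsto1-x$ of $\mathcal{A}_n^\infty(\tilde\psi)$ to handle the sign of the numerators --- is the crux and, I expect, the main obstacle: the $p$-adic side records only the divisibility $p^{k(m)}\mid am-bn$ whereas the real side records the size of $|am-bn|$, so the pairs for which the $p$-adic balls coincide but the real intervals are nearly disjoint must be shown to contribute only an amount absorbed by the first-moment term.

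\textbf{Conclusion.} Combining $\mu_p(\mathcal{A}_n^p(\psi))\asymp\lambda(\mathcal{A}_n^\infty(\tilde\psi))$ with the intersection bound and with $(\text{QIA}^\infty,\tilde\psi)$ --- the error term being harmless since $\sum_n\mu_p(\mathcal{A}_n^p(\psi))\asymp\sum_n\lambda(\mathcal{A}_n^\infty(\tilde\psi))\to\infty$ --- yields
\[
\limsup_{N\to\infty}\Bigl(\sum_{n\le N}\mu_p(\mathcal{A}_n^p(\psi))\Bigr)^{2}\Bigl(\sum_{n,m\le N}\mu_p(\mathcal{A}_n^p(\psi)\cap\mathcal{A}_m^p(\psi))\Bigr)^{-1}>0 .
\]
The divergence form of the Borel--Cantelli lemma (the Kochen--Stone inequality; cf.\ \cite{MR1099767}) then gives $\mu_p(\mathcal{A}^p(\psi))>0$, and since $\mathcal{A}^p(\psi)$ is a $\limsup$ of finite unions of $p$-adic balls around rationals, a Gallagher-type zero--one law --- whose proof transfers to $\mathbb{Z}_p$, the Lebesgue density theorem being available there --- promotes this to $\mu_p(\mathcal{A}^p(\psi))=1$, completing the divergence case and hence Theorem~\ref{thm:main1}.
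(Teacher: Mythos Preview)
The paper does not prove Theorem~\ref{thm:Haynes}; it is quoted as Theorem~2 of \cite{MR2576282} in the auxiliary results section and used as a black box. So there is no ``paper's own proof'' to compare against, and your proposal is really an attempt to reconstruct Haynes' argument rather than anything the present paper supplies.

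That said, your outline follows the correct architecture of Haynes' proof: replace $\psi$ by a step function $\tilde\psi$ taking values in $\{p^{-k}\}$, set up a correspondence $\mu_p(\mathcal{A}_n^p(\psi))\asymp\lambda(\mathcal{A}_n^\infty(\tilde\psi))$, transfer the second-moment bound, and finish with Kochen--Stone plus the $p$-adic zero--one law (Lemma~1 of \cite{MR2576282}). The genuine gap is exactly where you flag it: the pairwise intersection estimate. You correctly identify that the $p$-adic overlap is governed by the congruence $am\equiv bn\pmod{p^{k(m)}}$ while the real overlap is governed by the \emph{size} of $|am-bn|$, and these are a priori unrelated. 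Your proposal does not actually carry out the reconciliation --- you write that this ``is the crux and, I expect, the main obstacle'' and leave it there. In Haynes' paper this step occupies the bulk of the work (his Lemmas~4--6 and the surrounding analysis), and it is not a routine divisor count: one must separately handle the ranges where the $p$-adic balls are saturated, exploit the specific structure of reduced residues, and control the exceptional pairs where the congruence holds but $|am-bn|$ is large. Without that argument, what you have is a plausible strategy rather than a proof.

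In short: for the purposes of the present paper nothing needs to be proved here --- a citation suffices --- and if you do want to reprove Haynes' theorem, the intersection comparison you defer is precisely the substance of the result.
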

	
	Thus, to prove Theorem \ref{thm:main1} we need only verify $(\text{QIA}^\infty,\psi)$ for the required family of functions. For this, we will need results from \cite{MR1099767}. Before stating the required results, let us define some quantities. We fix a function $\psi: \mathbb{N} \rightarrow [0,\infty)$ and define for $m,n \in \mathbb{N}$
	$$
	M(m,n) = \max\{m\psi(n), n \psi(m)\}.
	$$
	Furthermore, for $a,b\in \mathbb{N}$ and $t \ge 1$, we let
	$$
	L_t(a,b) = \sum_{\substack{p \vert ab/\gcd(a,b)^2 \\ p \ge t}} \frac1p.
	$$
	With this notation, Koukoulopoulos and Maynard proved the following result.
	
	\begin{lem}[Proposition 5.4 of \cite{MR4125453}]
		\label{lem:KM}
		Let $\psi: \mathbb{N} \rightarrow [0,\frac12]$ be a function with $\sum_{n=1}^\infty \phi(n)\psi(n)/n = \infty$, and let $1 \le X \le Y$ be such that $1 \le \sum_{X \le n \le Y}^\infty \phi(n)\psi(n)/n \le 2$. Finally, let $t \ge 1$ and put
		$$
		\mathcal{E}_t = \left\{(v,w) \in (\mathbb{Z}\cap[X,Y])^2 : \gcd(v,w) \ge \frac{M(v,w)}{t}, \; L_t(v,w) \ge 10\right\}.
		$$
		Then,
		$$
		\sum_{(v,w) \in \mathcal{E}_t} \frac{\phi(v)\psi(v)}{v} \; \frac{\phi(w)\psi(w)}{w} \ll \frac{1}{t}.
		$$
	\end{lem}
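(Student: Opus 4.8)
\emph{Proof sketch.}
The plan is to run the Rankin--Markov strategy ``$\sum\mathbf{1}[\text{rare event}]\le(\text{small})\times\sum(\text{unconditional})$'' on the condition $L_t\ge 10$. Write $f(n)=\phi(n)\psi(n)/n$, so $0\le f\le\tfrac12$ and $\sum_{X\le n\le Y}f(n)\le 2$, and set $\ell_t(m)=\sum_{p\mid m,\,p\ge t}1/p$, so that for a pair with $d=\gcd(v,w)$, $a=v/d$, $b=w/d$ we have $\gcd(a,b)=1$ and $L_t(v,w)=\ell_t(a)+\ell_t(b)$. Since the sum in question is at most $\bigl(\sum_{X\le n\le Y}f(n)\bigr)^2\le 4$, we may assume $t\ge t_0$ for an absolute constant $t_0$. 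The defining inequality $d\ge M(v,w)/t$ is used only through the two size bounds it yields, namely $\psi(w)\le t/a$ and $\psi(v)\le t/b$.

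First I would remove the condition $L_t\ge 10$ by writing, for a small absolute $\kappa>0$, $\mathbf{1}[L_t(v,w)\ge 10]\le e^{-10\kappa}e^{\kappa\ell_t(a)}e^{\kappa\ell_t(b)}$, which gives
\[
\sum_{(v,w)\in\mathcal E_t} f(v)f(w)\ \le\ e^{-10\kappa}\sum_{d\ge 1}\ \sum_{\substack{a,b\ \text{coprime}\\ da,\,db\in[X,Y]\\ a\psi(db)\le t,\ b\psi(da)\le t}} \bigl(f(da)e^{\kappa\ell_t(a)}\bigr)\bigl(f(db)e^{\kappa\ell_t(b)}\bigr).
\]
Since $\ell_t(a)\le\ell_t(da)$, each factor $f(da)e^{\kappa\ell_t(a)}$ is at most $f(da)e^{\kappa\ell_t(da)}$; and since $e^{\kappa/p}=1+O(\kappa/p)$ for $p\ge t$, Mertens' theorem gives $\sum_{p\ge t}(e^{\kappa/p}-1)/p=O(\kappa/(t\log t))$, so the weight $e^{\kappa\ell_t(\cdot)}$ is an innocuous multiplicative perturbation of $f$. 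The remaining task is to carry out the sum over $d,a,b$: for fixed $d$ the sum over $b$ is a partial sum of $\sum_{n\in[X,Y],\,d\mid n}f(n)e^{\kappa\ell_t(n)}$, after which one must recombine the sums over $d$ and $a$ using the truncations $\psi(v)\le t/b$ and $\psi(w)\le t/a$.

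The main obstacle is precisely the extraction of the factor $1/t$, uniformly in $X$ and $Y$. Bounding $f(v)\le\psi(v)\le t/b$ discards the information in the normalisation $\sum_{X\le n\le Y}f(n)\le 2$, whereas using that normalisation alone loses all control on the size of $vw/\gcd(v,w)^2$, hence all uniformity in $Y$. The delicate point is to interleave the two: keep $f$ as a measure of total mass $\le 2$, use $\psi(v)\le t/b$ (and symmetrically $\psi(w)\le t/a$) only to truncate the ranges of $b$ and $a$, and exploit that $L_t(v,w)\ge 10$ forces $\omega\!\bigl(vw/\gcd(v,w)^2\bigr)\ge 10t$, hence $vw/\gcd(v,w)^2\ge t^{10t}$ and so $\max\bigl(v/\gcd(v,w),\,w/\gcd(v,w)\bigr)\ge t^{5t}$. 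This enormous factor is what makes the pairs in $\mathcal E_t$ carry only an $O(1/t)$ proportion of the total mass $\bigl(\sum f\bigr)^2$; turning this into an estimate valid for all $X\le Y$ is the crux, and it is here that the sharp Mertens-type and anatomy-of-integers input is indispensable rather than the soft bounds above. $\square$
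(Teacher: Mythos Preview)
This lemma is not proved in the present paper: it is quoted verbatim as Proposition~5.4 of Koukoulopoulos--Maynard~\cite{MR4125453}, and its proof occupies the bulk of that paper. So there is no proof here to compare your sketch against.

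That said, your sketch is not a proof, as you yourself concede in the final paragraph. The Rankin-type replacement $\mathbf{1}[L_t\ge 10]\le e^{-10\kappa}e^{\kappa L_t}$ is a natural opening move, and the observation that $L_t(v,w)\ge 10$ forces $ab=vw/\gcd(v,w)^2\ge t^{10t}$ is correct and relevant, but neither comes close to delivering the $1/t$ saving. The fundamental difficulty is that $\psi$ is an \emph{arbitrary} function, so the weights $f(n)=\phi(n)\psi(n)/n$ carry no multiplicative structure whatsoever: you cannot separate the sum over $(d,a,b)$ into anything tractable, and the multiplicative perturbation $e^{\kappa\ell_t(\cdot)}$ buys nothing because there is no underlying Euler product to perturb. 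Your closing sentence, that ``sharp Mertens-type and anatomy-of-integers input is indispensable'', understates the gap: no amount of classical anatomy-of-integers machinery suffices here, and this was precisely why the Duffin--Schaeffer conjecture remained open for decades.

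The actual argument in~\cite{MR4125453} proceeds by an entirely different route. One builds a bipartite ``GCD graph'' whose vertex sets are weighted by $f$ and whose edges encode the divisibility constraint $\gcd(v,w)\ge M(v,w)/t$; the quantity to be bounded is then an edge-weighted density. The key idea is an iterative \emph{compression} procedure: at each step one passes to a subgraph with more controlled prime-power divisibility on each side, while tracking a carefully designed ``quality'' functional that is shown not to decrease. After sufficiently many iterations the graph is structured enough that the bound becomes elementary, and unwinding the quality inequality yields the factor $1/t$. This combinatorial method is the main innovation of~\cite{MR4125453}, and nothing resembling it appears in your sketch.
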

	
	We will need the following Lemma of Pollington and Vaughan.
	
	\begin{lem}[From \S 3 of \cite{MR1099767}]
		\label{lem:PV}
		Let $\psi: \mathbb{N} \rightarrow [0,\frac12]$. For $m\neq n$,
		$$
		\frac{\lambda(\mathcal{A}^\infty_n(\psi) \cap \mathcal{A}^\infty_m(\psi))}{\lambda(\mathcal{A}^\infty_n(\psi))\lambda(\mathcal{A}^\infty_m(\psi))} \ll \mathbbm{1}_{M(m,n) \ge \gcd(m,n)} \prod_{\substack{p \vert mn/\gcd(m,n)^2 \\ p > M(m,n)/\gcd(m,n)}} \left(1+ \frac{1}{p}\right).
		$$
	\end{lem}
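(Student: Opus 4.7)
The plan is to expand $\mathcal{A}^\infty_n(\psi)\cap\mathcal{A}^\infty_m(\psi)$ as a union of pairwise intersections of the constituent intervals, count those intersections via a sieve, and compare the outcome with $\lambda(\mathcal{A}^\infty_n(\psi))\lambda(\mathcal{A}^\infty_m(\psi))\asymp \phi(m)\phi(n)\psi(m)\psi(n)$. Two admissible intervals $[a/n-\psi(n),a/n+\psi(n)]$ and $[b/m-\psi(m),b/m+\psi(m)]$ overlap exactly when $|am-bn|\le mn(\psi(n)+\psi(m))$, a condition governed by $M(m,n)$, and in that case the length of the overlap is at most $2\min(\psi(m),\psi(n))$. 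The task therefore reduces to bounding the number $N(m,n)$ of coprimal pairs $(a,b)$ with $1\le a\le n$, $1\le b\le m$, $\gcd(a,n)=\gcd(b,m)=1$, and $|am-bn|$ below this threshold.

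Setting $d=\gcd(m,n)$, $m=dm'$ and $n=dn'$ with $\gcd(m',n')=1$, one has $d\mid am-bn$; since $m\neq n$ and $(a,n)=(b,m)=1$, the case $am=bn$ is excluded, so every overlap satisfies $|am-bn|\ge d$, which after standard analysis produces the indicator $\mathbbm{1}_{M(m,n)\ge \gcd(m,n)}$. Parametrising the allowed differences by $k=am'-bn'$, each admissible $k$ determines $a\pmod{n'}$ and hence yields $d$ shifts for $a$ in $[1,n]$, with a unique $b$ for each. The main step is to impose the coprimality conditions $(a,n)=(b,m)=1$: for each prime $p\mid m'n'=mn/\gcd(m,n)^2$ the relation $am'\equiv bn'+k\pmod{p}$ makes these equivalent to $k\not\equiv 0\pmod{p}$, while for each prime $p\mid d$ they become density conditions on the $d$ shifts of $a$ and on the resulting $b$.

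A Brun- or Selberg-style sieve applied to the $k$-parameter then saves a factor $(1-1/p)$ for every prime $p\mid m'n'$ below the threshold $M(m,n)/d$, where the admissible range of $k$ covers many residue classes, and only a trivial factor for larger primes where the range is too short to sieve. Dividing by $\phi(m)\phi(n)\asymp mn\prod_{p\mid d}(1-1/p)^2\prod_{p\mid m'n'}(1-1/p)$, the small-prime sieve savings cancel the corresponding factors in $\phi(m)\phi(n)$, whereas the unsaved factors $(1-1/p)^{-1}\asymp 1+1/p$ for primes $p\mid m'n'$ with $p>M(m,n)/d$ survive in the denominator and produce the advertised product. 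The main obstacle is precisely this bookkeeping across the threshold $M(m,n)/\gcd(m,n)$, together with the coupling between the coprimality conditions on $(a,n)$ and $(b,m)$; getting this cancellation transparent, rather than the crude absolute bound one obtains by ignoring the sieve, is the delicate argument carried out in \S 3 of \cite{MR1099767}.
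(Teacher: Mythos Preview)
The paper does not prove this lemma; it is quoted verbatim from \S 3 of Pollington--Vaughan \cite{MR1099767} and used as a black box. Your sketch is a faithful outline of precisely that argument: reduce $\lambda(\mathcal{A}^\infty_m\cap\mathcal{A}^\infty_n)$ to a count of coprime pairs $(a,b)$ with $|am-bn|$ small, note that $d=\gcd(m,n)$ divides $am-bn$ and that $am\neq bn$ (from $m\neq n$ and the coprimality) forces $|am-bn|\ge d$, parametrise by $k=(am-bn)/d$, translate the conditions $\gcd(a,n)=\gcd(b,m)=1$ at primes $p\mid mn/d^2$ into $p\nmid k$, sieve the $k$-range, and compare with $\phi(m)\phi(n)$ so that only the unsieved primes $p>M(m,n)/d$ survive as the product $\prod(1+1/p)$. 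So you and the paper agree, in that you have supplied the argument the paper merely cites.

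One small normalisation slip: with the intervals having half-width $\psi(n)/n$ (the normalisation needed for the measure formula \eqref{eq:measure} and the definition $M(m,n)=\max\{m\psi(n),n\psi(m)\}$ to be consistent), the overlap condition reads $|am-bn|\le m\psi(n)+n\psi(m)\asymp M(m,n)$, not $|am-bn|\le mn(\psi(n)+\psi(m))$ as you wrote. With your version the inequality $|am-bn|\ge d$ does not yield the indicator $\mathbbm{1}_{M(m,n)\ge d}$. This is a bookkeeping issue only (and the paper's own display of $\mathcal{A}^\infty_n$ carries the same ambiguity); once corrected, your outline matches the Pollington--Vaughan proof.
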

	
	Finally, we will need the classical estimate of Mertens \cite{MR1579612} that for $x \ge 2$
	\begin{equation}
		\label{eq:mertens}
		\sum_{p \le x} \frac{1}{p} = \log \log x + b + O\left(\frac{1}{\log x}\right).
	\end{equation}

	\section{Proof of the main theorems}
	
	We start with Theorem \ref{thm:main1}. For this purpose, we will need a mild extension of Lemma \ref{lem:KM}.
	
	\begin{lem}
		\label{lem:lemma1}
		Let $\psi: \mathbb{N} \rightarrow [0,\frac12]$ with $\sum_{n=1}^\infty \phi(n)\psi(n)/n = \infty$, let $K \in \mathbb{N}$, and let $1 \le X \le Y$ be such that $K \le \sum_{X \le n \le Y}^\infty \phi(n)\psi(n)/n \le 2K$. Finally, let $t \ge 1$ and put
		$$
		\mathcal{E}^K_t = \left\{(v,w) \in (\mathbb{Z}\cap[X,Y])^2 : \gcd(v,w) \ge \frac{M(v,w)}{t}, \; L_t(v,w) \ge 10\right\}.
		$$
		Then,
		$$
		\sum_{(v,w) \in \mathcal{E}_t} \frac{\phi(v)\psi(v)}{v} \; \frac{\phi(w)\psi(w)}{w} \ll \frac{K^2}{t}.
		$$
	\end{lem}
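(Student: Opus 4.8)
The plan is to deduce Lemma \ref{lem:lemma1} from Lemma \ref{lem:KM} by a single rescaling, with no need to reopen the argument of Koukoulopoulos and Maynard. I would set $\tilde{\psi} = \psi/K$. Since $\psi$ takes values in $[0,\frac12]$ and $K \in \mathbb{N}$, the function $\tilde{\psi}$ takes values in $[0,\frac{1}{2K}] \subseteq [0,\frac12]$; dividing the divergence hypothesis and the normalisation $K \le \sum_{X \le n \le Y} \phi(n)\psi(n)/n \le 2K$ by $K$ shows that $\sum_n \phi(n)\tilde{\psi}(n)/n = \infty$ and $1 \le \sum_{X \le n \le Y} \phi(n)\tilde{\psi}(n)/n \le 2$. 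Hence $\tilde{\psi}$, together with the same $X$, $Y$ and $t$, satisfies the hypotheses of Lemma \ref{lem:KM}.

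Next I would track how the relevant quantities transform. Writing $\tilde{M}(v,w) = \max\{v\tilde{\psi}(w), w\tilde{\psi}(v)\}$ for the version of $M$ attached to $\tilde{\psi}$, one has $\tilde{M}(v,w) = M(v,w)/K$, whereas $L_t(v,w)$ does not depend on the approximating function at all and is unchanged. Consequently the exceptional set furnished by Lemma \ref{lem:KM} applied to $\tilde{\psi}$ is
$$\widetilde{\mathcal{E}}_t = \left\{(v,w) \in (\mathbb{Z}\cap[X,Y])^2 : \gcd(v,w) \ge \frac{M(v,w)}{Kt}, \; L_t(v,w) \ge 10\right\},$$
and Lemma \ref{lem:KM} gives $\sum_{(v,w) \in \widetilde{\mathcal{E}}_t} \frac{\phi(v)\tilde{\psi}(v)}{v}\frac{\phi(w)\tilde{\psi}(w)}{w} \ll \frac1t$.

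The final step is the elementary observation that, since $K \ge 1$, we have $M(v,w)/t \ge M(v,w)/(Kt)$, so the defining condition $\gcd(v,w) \ge M(v,w)/t$ of $\mathcal{E}^K_t$ implies the defining condition of $\widetilde{\mathcal{E}}_t$; that is, $\mathcal{E}^K_t \subseteq \widetilde{\mathcal{E}}_t$. Since every summand is nonnegative and $\frac{\phi(v)\psi(v)}{v}\frac{\phi(w)\psi(w)}{w} = K^2 \cdot \frac{\phi(v)\tilde{\psi}(v)}{v}\frac{\phi(w)\tilde{\psi}(w)}{w}$, it follows that
$$\sum_{(v,w) \in \mathcal{E}^K_t} \frac{\phi(v)\psi(v)}{v}\,\frac{\phi(w)\psi(w)}{w} = K^2 \sum_{(v,w) \in \mathcal{E}^K_t} \frac{\phi(v)\tilde{\psi}(v)}{v}\,\frac{\phi(w)\tilde{\psi}(w)}{w} \le K^2 \sum_{(v,w) \in \widetilde{\mathcal{E}}_t} \frac{\phi(v)\tilde{\psi}(v)}{v}\,\frac{\phi(w)\tilde{\psi}(w)}{w} \ll \frac{K^2}{t},$$
which is the assertion of Lemma \ref{lem:lemma1}.

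There is no genuinely hard step here; the only points needing care are that the rescaling converts the $[K,2K]$ normalisation into the $[1,2]$ normalisation required by Lemma \ref{lem:KM}, that $L_t$ is independent of $\psi$, and --- crucially for the direction of the inequality --- that scaling $\psi$ down by $K$ \emph{relaxes} the condition $\gcd(v,w) \ge M(v,w)/t$ and thereby enlarges the exceptional set, so that $\mathcal{E}^K_t$ sits inside the set that Lemma \ref{lem:KM} controls rather than outside it.
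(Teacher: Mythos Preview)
Your argument is correct and is exactly the approach taken in the paper: replace $\psi$ by $\psi/K$, observe that this rescales $M$ by $1/K$ so that the exceptional set of Lemma~\ref{lem:KM} for $\psi/K$ contains $\mathcal{E}^K_t$, and multiply through by $K^2$. You have simply spelled out in full the details that the paper's one-sentence proof leaves implicit.
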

	
	\begin{proof}
		This follows readily on replacing $\psi$ by $\frac{1}K \psi$ in Lemma \ref{lem:KM} and noting that $\max\{n \frac1K \psi(m), m \frac1K \psi(n)\} = \frac1K M(m,n)$, which implies that the set $\mathcal{E}_t$ of Lemma \ref{lem:KM} with $\psi$ replaced by $\frac1K \psi$ contains the set $\mathcal{E}^K_t$.
	\end{proof}
	
	With this version of the Lemma, we can prove Theorem \ref{thm:main1}. As with the Lemma, the proof is a modification of the argument used for the Duffin--Schaeffer conjecture in \cite{MR4125453}.
	
	\begin{proof}[Proof of Theorem \ref{thm:main1}]
		First, a straightforward calculation shows that
		\begin{equation}
			\label{eq:measure}
			\lambda(\mathcal{A}^\infty_n(\psi)) = (2 - \mathbbm{1}_{n=1}) \frac{\phi(n)\psi(n)}{n}.
		\end{equation}
		Now, fix a $K \in \mathbb{N}$. Since $\sum_{n=1}^\infty \phi(n)\psi(n)/n = \infty$ with each summand $\le \frac12$, we can find an integer $N_K \in \mathbb{N}$ with
		$$
		\sum_{n=1}^{N_K} \frac{\phi(n)\psi(n)}{n} \in [K, K+1).
		$$
		We will prove that the $\limsup$ in $(\text{QIA}^\infty,\psi)$ along the sequence $(N_K)_{K=1}^\infty$ is positive. Clearly, this implies $(\text{QIA}^\infty,\psi)$, and hence by Theorem \ref{thm:Haynes} implies our Theorem \ref{thm:main1}.
		
		Clearly, by choice of $N_K$, 
		$$
		K^2 \le \left(\sum_{n=1}^{N_K} \frac{\phi(n)\psi(n)}{n}\right)^2.
		$$
		Hence, if we can prove that 
		\begin{equation}
			\label{eq:qia}
			\sum_{m,n \le N_K} \lambda(\mathcal{A}^\infty_n(\psi) \cap \mathcal{A}^\infty_m(\psi)) \ll K^2,
		\end{equation}
		we are done.
		
		We deal first with the diagonal terms, $n=m$. In this case, by \eqref{eq:measure}, 
		\begin{alignat*}{2}
			\sum_{m=n \le N_K} \lambda(\mathcal{A}^\infty_n(\psi) \cap \mathcal{A}^\infty_m(\psi)) &= \sum_{n=1}^{N_K} \lambda(\mathcal{A}^\infty_n(\psi))\\
			&\le 2 \sum_{n=1}^{N_K} \frac{\phi(n)\psi(n)}{n} \le 2K+2 \ll K^2,
		\end{alignat*}
		by choice of $N_K$. It thus suffices to prove that 
		$$
		\sum_{1 \le m < n \le N_K} \lambda(\mathcal{A}^\infty_n(\psi) \cap \mathcal{A}^\infty_m(\psi)) \ll K^2,
		$$
		by symmetry. Inserting \eqref{eq:measure} into the estimate of Lemma \ref{lem:PV}, it follows that \eqref{eq:qia} holds, provided
		\begin{equation}
			\label{eq:qia2}
			\sum_{\substack{1 \le m < n \le N_K \\ M(m,n) \ge \gcd(m,n)}}  \frac{\phi(n)\psi(n)}{n} \; \frac{\phi(m)\psi(m)}{m} \prod_{\substack{p \vert mn/\gcd(m,n)^2\\ p > M(m,n)/\gcd(m,n)}}\left(1+\frac1p\right)\ll K^2.
		\end{equation}
		
		We will first show that unless the latter product is large, we may ignore the contribution of the pair $(m,n)$. To see this, consider those pairs $(m,n)$ for which 
		\begin{equation}
			\label{eq:small}
			\prod_{\substack{p \vert mn/\gcd(m,n)^2\\ p > M(m,n)/\gcd(m,n)}}\left(1+\frac1p\right) \le e^{100}.
		\end{equation}
		For these pairs,
		\begin{alignat*}{2}
			\sum_{\substack{1 \le m < n \le N_K \\ M(m,n) \ge \gcd(m,n) \\ \text{\eqref{eq:small} holds}}} & \frac{\phi(n)\psi(n)}{n} \; \frac{\phi(m)\psi(m)}{m} \prod_{\substack{p \vert mn/\gcd(m,n)^2\\ p > M(m,n)/\gcd(m,n)}}\left(1+\frac1p\right) \\
			&\le e^{100} \sum_{\substack{1 \le m < n \le N_K \\ M(m,n) \ge \gcd(m,n)}}  \frac{\phi(n)\psi(n)}{n} \; \frac{\phi(m)\psi(m)}{m} \\
			&\le e^{100} \left(\sum_{n=1}^{N_K} \frac{\phi(n)\psi(n)}{n}\right)^2 \ll K^2.
		\end{alignat*}
		
		We may hence ignore these pairs, and consider only the sum over pairs $(m,n)$ for which
		\begin{equation}
			\label{eq:large}
			\begin{aligned}
				100 &\le \log\left(\prod_{\substack{p \vert mn/\gcd(m,n)^2\\ p > M(m,n)/\gcd(m,n)}}\left(1+\frac1p\right)\right) 
				\\&= \sum_{\substack{p \vert mn/\gcd(m,n)^2\\ p > M(m,n)/\gcd(m,n)}} \log\left(1+\frac1p\right).
			\end{aligned}
		\end{equation}
		Since $\log(1+x) \le x$, for $x > 0$, pairs satisfying \eqref{eq:large} must satisfy
		\begin{equation}
			\label{eq:large2}
			\begin{aligned}
			100 &\le \log\left(\prod_{\substack{p \vert mn/\gcd(m,n)^2\\ p > M(m,n)/\gcd(m,n)}}\left(1+\frac1p\right)\right)\le \sum_{\substack{p \vert mn/\gcd(m,n)^2\\ p > M(m,n)/\gcd(m,n)}} \frac{1}{p} \\&\le \sum_{p \vert mn/\gcd(m,n)^2} \frac1p.
			\end{aligned}
		\end{equation}
		
		Now, let $j =j(m,n)$ be the largest integer such that
		$$
		L_{\exp \exp(j)}(m,n) = \sum_{\substack{p \vert mn/\gcd(m,n)^2\\ p > \exp \exp(j)}} \frac{1}{p} \ge 10.
		$$
		By maximality,
		$$
		\sum_{\substack{p \vert mn/\gcd(m,n)^2\\ p > \exp \exp(j+1)}} \frac{1}{p} < 10.
		$$
		By Mertens' estimate \eqref{eq:mertens},
		$$
		\sum_{p > \exp \exp(j)} \frac{1}{p} = j +O(1),
		$$
		so that 
		$$
		\sum_{\exp \exp(j)\le p < \exp \exp(j+1)} \frac{1}{p} = O(1).
		$$
		Thus, 
		$$
		\sum_{\substack{p \vert mn/\gcd(m,n)^2\\ p > \exp \exp(j)}} \frac1p \le \sum_{\substack{p \vert mn/\gcd(m,n)^2\\ p > \exp \exp(j+1)}} \frac1p + \sum_{\exp \exp(j)\le p < \exp \exp(j+1)} \frac{1}{p} = O(1).
		$$
		For pairs $(m,n)$ satisfying \eqref{eq:large}, it follows on applying \eqref{eq:large2} that
		\begin{alignat*}{2}
			\prod_{\substack{p \vert mn/\gcd(m,n)^2\\ p > M(m,n)/\gcd(m,n)}} \left(1+\frac1p\right)  &\le \exp\left(\sum_{\substack{p \vert mn/\gcd(m,n)^2\\ p > M(m,n)/\gcd(m,n)}} \frac1p\right) \\
			&\ll \begin{cases}
				1 & \text{for } M(m,n)/\gcd(m,n) \ge \exp \exp(j), \\
				e^j & \text{otherwise.}
			\end{cases}
		\end{alignat*}
		
		By the above, the pairs with $M(m,n)/\gcd(m,n) \ge \exp \exp(j)$ contribute in total with $\ll K^2$ to the sum of \eqref{eq:qia2} by applying the same argument as when the products were bounded by $e^{100}$. Thus, these pairs may also be ignored, and we are left with the pairs $(m,n)$ for which $\exp \exp(j) \ge M(m,n)/\gcd(m,n)$. We order these by the size of $j= j(m,n)$ to see that these contribute at most
		\begin{alignat*}{2}
			\sum_{\substack{1 \le m < n \le N_K \\ M(m,n) \ge \gcd(m,n)\\ \exp \exp(j) \ge M(m,n)/\gcd(m,n)}}  &\frac{\phi(n)\psi(n)}{n} \; \frac{\phi(m)\psi(m)}{m} \prod_{\substack{p \vert mn/\gcd(m,n)^2\\ p > M(m,n)/\gcd(m,n)}}\left(1+\frac1p\right) \\
			& \ll \sum_{t=0}^\infty e^t \sum_{\substack{1 \le m < n \le N_K \\  \exp \exp(t) \ge M(m,n)/\gcd(m,n) \\ L_{\exp \exp(t)}(m,n) \ge 10}}  \frac{\phi(n)\psi(n)}{n} \; \frac{\phi(m)\psi(m)}{m} \\
			&= \sum_{t=0}^\infty e^t \sum_{(m,n) \in \mathcal{E}_{\exp \exp(t)}} \frac{\phi(n)\psi(n)}{n} \; \frac{\phi(m)\psi(m)}{m},
		\end{alignat*}
		where $\mathcal{E}_t$ is as in Lemma \ref{lem:lemma1} with $X=1$ and $Y=N_K$. Thus, by that lemma,
		\begin{alignat*}{2}
			\sum_{\substack{1 \le m < n \le N_K \\ M(m,n) \ge (m,n)\\ \exp \exp(j) \ge M(m,n)/\gcd(m,n)}}&  \frac{\phi(n)\psi(n)}{n} \; \frac{\phi(m)\psi(m)}{m} \prod_{\substack{p \vert mn/\gcd(m,n)^2\\ p > M(m,n)/\gcd(m,n)}}\left(1+\frac1p\right) \\
			&\ll \sum_{t=0}^\infty e^t \frac{K^2}{\exp \exp(t)} \ll K^2.
		\end{alignat*}
		This completes the proof of Theorem \ref{thm:main1}.
	\end{proof}
	
	We now turn to the proof of Theorem \ref{thm:main2}. 
	\begin{proof}[Proof of Theorem \ref{thm:main2}]
		As usual, the convergence case is the easier case, so suppose that $\sum_{n=1}^\infty \phi(n)\psi(n)/n < \infty$. Clearly, $\mathcal{A}^p_n(\psi) \subseteq \mathcal{B}^p_n(\psi)$, and since evidently $\mu_p(\mathcal{B}^p_n(\psi)) \le 4 \phi(n)\psi(n)/n$,
		$$
		\sum_{n=1}^\infty \mu_p(\mathcal{A}^p_n(\psi)) \le \sum_{n=1}^\infty \mu_p(\mathcal{B}^p_n(\psi)) \le 4 \sum_{n=1}^\infty \frac{\phi(n)\psi(n)}{n} < \infty,
		$$
		and the convergence part of Theorem \ref{thm:main2} follows immediately by the Borel--Cantelli Lemma.
		
		For the divergence case, suppose that $\sum_{n=1}^\infty \phi(n)\psi(n)/n = \infty$. Since clearly $\mathcal{A}^p(\psi) \subseteq \mathcal{B}^p(\psi)$, it suffices to prove that $\mu_p(\mathcal{A}^p(\psi)) = 1$. By Theorem \ref{thm:main1}, this will follow if $\sum_{n=1}^\infty \mu_p(\mathcal{A}^p_n(\psi)) = \infty$. We will prove the latter.
		
		Note that $\mathcal{A}_n^p(\psi)=\mathcal{A}_n^p(p^{-k_n})$ when $\psi(n)\neq 0$ and $k_n$ is the smallest non-negative integer such that $\psi(n)\geq p^{-k_n}$.
			We may therefore assume, without loss of generality, that $\psi(n)$ is always either $0$ or a non-negative power of $p$.
		From the first part of \cite[Lemma 4]{MR2576282}, translated into our notation, we find that if $\psi(n) < n/(12\phi(n))$, 
		$$
		\mu_p(\mathcal{A}^p_n(\psi)) \ge \frac{\phi(n)\psi(n)}{n}.
		$$
		Note that the lower bound assumed on $\psi$ in \cite[Lemma 4]{MR2576282} plays no role in this estimate. Hence,
		\begin{alignat*}{2}
			\sum_{n=1}^\infty \mu_p&(\mathcal{A}^p_n(\psi)) = \sum_{\psi(n) \ge n/(12\phi(n))} \mu_p(\mathcal{A}^p_n(\psi)) +\sum_{\psi(n) < n/(12\phi(n))}\mu_p(\mathcal{A}^p_n(\psi)) \\
			&\ge \sum_{\psi(n) \ge n/(12\phi(n))} \frac{\phi(n) n/(12\phi(n))}{n} + \sum_{\psi(n) < n/(12\phi(n))} \frac{\phi(n)\psi(n)}{n} \\
			&= \sum_{\psi(n) \ge n/(12\phi(n))} \frac{1}{12} + \sum_{\psi(n) < n/(12\phi(n))} \frac{\phi(n)\psi(n)}{n}.
		\end{alignat*}
		If $\psi(n) \ge n/(12\phi(n)) \ge 1/12$ for infinitely many values of $n$, the first series diverges, and hence the series $\sum_{n=1}^\infty \mu_p(\mathcal{A}^p_n(\psi))$ also diverges. Otherwise, since we have assumed that $\sum_{n=1}^\infty \phi(n)\psi(n)/n = \infty$, the latter series must diverge. Hence, in either case, $\sum_{n=1}^\infty \mu_p(\mathcal{A}^p_n(\psi)) = \infty$, and the theorem follows.
	\end{proof}
	
	We now proceed with the proof of Theorem \ref{thm:main3}. Initially, we consider the postulated gap in the possible measures attained between $1/p$ and $1$. This is the content of the following Lemma.
	
	\begin{lem}
		\label{lem:zero-one}
		Let $\psi: \mathbb{N} \rightarrow \mathbb{R}_{\ge 0}$, and suppose that $\mu_p(\mathcal{B}^p(\psi)) > 1/p$. Then, $\mu_p(\mathcal{B}^p(\psi)) = 1$.
	\end{lem}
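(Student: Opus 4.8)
\textbf{Proof proposal for Lemma~\ref{lem:zero-one}.}

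The plan is to exploit a self-similarity (measure-scaling) property of the sets $\mathcal{B}^p_n(\psi)$ under multiplication by $p$ on $\mathbb{Z}_p$, together with a zero--one law for the $\limsup$ set. First I would observe that $\mathcal{B}^p(\psi)$ is a $\limsup$ of the sets $\mathcal{B}^p_n(\psi)$, and that each $\mathcal{B}^p_n(\psi)$ is a finite union of balls of $p$-adic radius determined by $\psi(n)/n$ and the valuation of $n$ (and of $a$). The key structural fact is that these ball configurations are invariant, in an appropriate sense, under the maps $x \mapsto px$ and $x \mapsto x + b$ for $b \in \mathbb{Z}$: translating a reduced fraction $a/n$ by an integer $b$ gives $(a+bn)/n$, which is still reduced with the same denominator, and the numerators range over a full set of residues coprime to $n$ modulo $n$ (up to the negative-numerator convention, which is harmless). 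Consequently $\mathcal{B}^p(\psi)$ is invariant under translation by $\mathbb{Z}$, and more importantly, since translation by $1/p^j$-type shifts permutes the relevant balls, the measure of $\mathcal{B}^p(\psi)$ restricted to each coset $c + p\mathbb{Z}_p$ ($c = 0, 1, \dots, p-1$) is the same.

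Next I would set up the dichotomy. Write $E = \mathcal{B}^p(\psi)$ and for $c \in \{0,1,\dots,p-1\}$ let $E_c = E \cap (c + p\mathbb{Z}_p)$. By the permutation/invariance argument above, $\mu_p(E_c)$ is independent of $c$, so $\mu_p(E) = p\,\mu_p(E_0)$, and in particular $\mu_p(E) > 1/p$ forces $\mu_p(E_0) > 1/p^2$, i.e.\ the conditional measure of $E$ inside each coset of $p\mathbb{Z}_p$ exceeds $1/p$. Iterating: the same rescaling argument applied inside $p\mathbb{Z}_p$ (via $x \mapsto px$, which scales Haar measure by $1/p$ and carries the approximation problem at level $n$ essentially onto an approximation problem whose relevant denominators are still controlled) should show that the conditional measure of $E$ inside every ball $c + p^k\mathbb{Z}_p$ is the same for all $c$ in a fixed congruence class, and is $\ge$ the conditional measure one level up. The upshot is that $\mu_p(E)$ dominates the conditional density of $E$ in arbitrarily small balls; combined with the Lebesgue density theorem for $\mathbb{Z}_p$ (density $1$ at almost every point of $E$ and density $0$ at almost every point of the complement), if $\mu_p(E) < 1$ one gets points of density strictly between $0$ and $1$ along the ball filtration, contradicting the uniformity unless $\mu_p(E) = 0$; and $\mu_p(E) > 1/p > 0$ rules that out.

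An alternative, cleaner route I would try is to invoke a Gallagher-type zero--one law directly: the set $\mathcal{B}^p(\psi)$, being a $\limsup$ of unions of balls centred at rationals, is invariant (mod null sets) under the action of a large enough group of measure-preserving transformations of $\mathbb{Z}_p$ — namely translations by $\mathbb{Q} \cap \mathbb{Z}_{(p)}$, or at least by $\mathbb{Z}$ together with the scaling that accounts for denominators — and any such invariant $\limsup$ set has measure $0$ or $1$. Since we are told $\mu_p(\mathcal{B}^p(\psi)) > 1/p > 0$, the measure cannot be $0$, hence it is $1$. The main obstacle is making the invariance precise: the sets are \emph{not} literally translation-invariant by all of $\mathbb{Q}_p$ (that would collapse the problem), so one must identify exactly which symmetries the numerator-restricted sets $\mathcal{B}^p_n(\psi)$ genuinely possess, show these symmetries act ergodically (or at least transitively enough on the ball filtration) on $\mathbb{Z}_p$, and verify that passing to the $\limsup$ preserves the invariance up to null sets. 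I expect the honest argument to go via the conditional-measure-on-cosets computation of the second paragraph — showing $\mu_p(E \cap (c+p\mathbb{Z}_p))$ is the same for all $c$, then bootstrapping to all of $\mathbb{Z}_p/p^k\mathbb{Z}_p$ — which reduces the lemma to: \emph{if a $\limsup$ set has equal conditional density across the $p^k$ balls of level $k$ for every $k$, and positive measure, then it has full measure}, and this last step is the density-theorem argument.
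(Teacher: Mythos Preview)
Your approach has a genuine gap: the central invariance claim --- that $\mu_p\bigl(\mathcal{B}^p(\psi)\cap(c+p\mathbb{Z}_p)\bigr)$ is the same for all $c\in\{0,1,\dots,p-1\}$ --- is false in general. The paper itself constructs counterexamples: for the function $\psi_k$ of \eqref{eq:psi_k} (or Haynes' function $\psi'$), one has $\mathcal{B}^p(\psi_k)=p^k\mathbb{Z}_p\setminus p^{k+1}\mathbb{Z}_p\subseteq p\mathbb{Z}_p$, so the entire mass sits in the single coset $0+p\mathbb{Z}_p$. The reason your translation argument breaks down is the second union in the definition of $\mathcal{B}^p_n(\psi)$: when $p\mid n$ the centres $a/n$ lie outside $\mathbb{Z}_p$ and contribute nothing, while every centre $n/a$ with $\gcd(a,n)=1$ satisfies $|n/a|_p=|n|_p\le p^{-1}$, so (for small enough radius) $\mathcal{B}^p_n(\psi)\subseteq p\mathbb{Z}_p$. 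There is no permutation of numerators that moves these balls to other cosets, and hence no Gallagher-type zero--one law is available for $\mathcal{B}^p(\psi)$ --- indeed Theorem~\ref{thm:main3} shows the measure can take uncountably many intermediate values in $[0,1/p]$.

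The paper's proof is quite different and exploits exactly this structural observation. One first disposes of the case $\mu_p(\mathcal{A}^p(\psi))=1$; otherwise a genuine zero--one law for $\mathcal{A}^p$ (from \cite{MR2576282}) gives $\mu_p(\mathcal{A}^p(\psi))=0$, and then Theorem~\ref{thm:main2} applied to $\psi\cdot\mathbbm{1}_{p\nmid n}$ forces the contribution from denominators coprime to $p$ to be null as well. Thus all the mass of $\mathcal{B}^p(\psi)$ comes from $\limsup_m \mathcal{B}^p_{mp}(\psi)$, which has measure $>1/p=\mu_p(p\mathbb{Z}_p)$, so one can pick $\alpha$ in this $\limsup$ with $|\alpha|_p=1$. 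But for $p\mid n$ and $\gcd(a,n)=1$ the ultrametric inequality gives $|\alpha-a/n|_p=|a/n|_p>1$ and $|\alpha-n/a|_p=|\alpha|_p=1$, so the only way $\alpha\in\mathcal{B}^p_n(\psi)$ is if $\psi(n)/n\ge 1$, i.e.\ $\mathcal{B}^p_n(\psi)=\mathbb{Z}_p$; this happening infinitely often yields $\mathcal{B}^p(\psi)=\mathbb{Z}_p$.
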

	
	\begin{proof}
		Since $\mathcal{A}^p(\psi) \subseteq \mathcal{B}^p(\psi)$, we are done if $\mu_p(\mathcal{A}^p(\psi)) = 1$, so suppose this is not the case. In this case, by \cite[Lemma 1]{MR2576282}, $\mu_p(\mathcal{A}^p(\psi)) = 0$. Now, applying Theorem \ref{thm:main2} with the function $\psi \cdot \mathbbm{1}_{p \nmid n}$, we find that $\mu_p(\mathcal{B}^p(\psi \cdot \mathbbm{1}_{p \nmid n})) = 0$. Hence, the bulk of the mass of $\mathcal{B}^p(\psi)$ is carried by the $\mathcal{B}^p_n(\psi)$ for which $p \mid n$, i.e.,
		\[
		\mu_p(\limsup_{m \rightarrow \infty} \mathcal{B}^p_{mp}(\psi)) = \mu_p (\mathcal{B}^p(\psi)) > \frac{1}{p}.
		\]
		Consequently, since $\mu_p(p\mathbb{Z}_p) = 1/p$, we may pick an element $\alpha \in \limsup_{m \rightarrow \infty} \mathcal{B}^p_{mp}(\psi) \setminus p \mathbb{Z}_p$.
		
		With $\alpha$ fixed, pick an increasing sequence of positive integers, $m_1 < m_2 < \dots$ with $\alpha \in \mathcal{B}^p_{pm_j}(\psi)$ for all $j \in \mathbb{N}$. Then, for each $j \in \mathbb{N}$, there is an $a \in  \mathbb{Z}$ with $\vert a \vert \le p m_j$ and $\gcd(a, pm_j) = 1$, such that
		\[
		\min\left\{ \left\vert \alpha - \frac{a}{pm_j}\right\vert_p, \left\vert \alpha - \frac{pm_j}{a}\right\vert_p \right\} \le \frac{\psi(pm_j)}{p m_j}.
		\]
		Since $\vert pm_j/a \vert_p = \vert pm_j \vert_p < 1 = \vert \alpha \vert_p < \vert a/(pm_j) \vert_p$, we immediately find that 
		\[
		\min\left\{ \left\vert \alpha - \frac{a}{pm_j}\right\vert_p, \left\vert \alpha - \frac{pm_j}{a}\right\vert_p \right\} \ge \min\left\{ \left\vert \frac{a}{pm_j}\right\vert_p, \left\vert \alpha \right\vert_p \right\} = 1,
		\]
		whence $\psi(pm_j)/(p m_j) \ge 1$, so that $\mathcal{B}^p_{pm_j}(\psi) = \mathbb{Z}_p$ for any $j \in \mathbb{N}$. Hence, $\mathcal{B}^p(\psi) = \mathbb{Z}_p$, and in particular $\mu_p(\mathbb{Z}_p) = 1$.
	\end{proof}
	
	To complete the proof of Theorem \ref{thm:main3}, we provide two examples. The first will furnish us with a family of functions $\psi_k$ for which $\mu_p(\mathcal{B}^p(\psi_k)) = (p-1)p^{-k-1}$. The second will modify this example slightly to provide an uncountable number of possible values for $\mu_p(\mathcal{B}^p(\psi_k))$, which in the case for $p=2$ will coincide with the set $[0,\frac{1}{2}]$. Together with any function making the series of Theorem \ref{thm:main2} diverge and Lemma \ref{lem:zero-one}, this shows that the spectrum of values for  $\mu_2(\mathcal{B}^2(\psi))$ is $\{1\} \cup [0,\frac{1}{2}]$. 
	
	For the first example, let $k \in \mathbb{N}$, and let $\psi_k: \mathbb{N} \rightarrow \mathbb{R}_{\ge 0}$ be given by 
	\begin{equation}
		\label{eq:psi_k}
		\psi_k(n) = 
		\begin{cases}
			n/p^{k+1} & \text{whenever } \nu_p(n) = k, \\
			0 & \text{otherwise.}
		\end{cases}
	\end{equation}
	We claim that $\mathcal{B}^p(\psi_k) = p^k\mathbb{Z}_p \setminus p^{k+1}\mathbb{Z}_p$, so that $\mu_p(\mathcal{B}p(\psi_k)) = (p-1)p^{-k-1}$.
	
	To see this, note first that only the sets $\mathcal{B}^p_n(\psi_k)$ with $\nu_p(n) = k$ occur as non-empty sets in the limsup set $\mathcal{B}^p(\psi_k)$. Fix such an $n$, and consider the sets making up $\mathcal{B}^p_n(\psi_k)$. Since $\nu_p(n) = k$, $\vert a/n \vert_p = p^k$ for $\gcd(a,n) = 1$, so these fractions are too far away from the $p$-adic integers to contribute to the approximation. Therefore, the sets defined by this inequality must be empty, and hence,
	\begin{align*}
		\mathcal{B}^p_n(\psi_k) &= \bigcup_{\substack{\vert a \vert \le n \\ \gcd(a,n) = 1}} \left\{\alpha \in \mathbb{Z}_p : \left\vert \alpha - \frac{n}{a}\right\vert_p \le p^{-k-1}\right\} \\&
		= \bigcup_{\substack{\vert a \vert \le n \\ \gcd(a,n) = 1}} \frac{n}{a} + p^{k+1} \mathbb{Z}_p.
	\end{align*}
	Since $p \nmid a$, $\nu_p(n/a) = \nu_p(n) = k$, and since $\nu_p(\alpha) \ge k+1$ for any $\alpha \in p^{k+1}\mathbb{Z}_p$, we find that any element in $\mathcal{B}_n^p(\psi_k)$ must have $p$-adic valuation exactly equal to $k$, i.e.
	\begin{equation}
		\label{eq:counterex}
		\mathcal{B}_n^p(\psi_k) \subseteq p^k\mathbb{Z}_p \setminus p^{k+1} \mathbb{Z}_p.
	\end{equation}
	Since this is true for all $n$ with $\nu_p(n) = k$, and since $\mathcal{B}_n^p(\psi_k) = \emptyset$ if $\nu_p(n) \neq k$,

	For the converse inclusion, let $q > p$ be a prime, and consider $n = p^kq$. Evidently,
	\[
	\mathcal{B}^p_{n}(\psi_k) \supseteq \bigcup_{a=1}^{p-1} \frac{q}{a}p^k + p^{k+1}\mathbb{Z}_p,
	\]
	since $\gcd(a,q) = \gcd(a,p) = 1$ for $a=1,2,\dots, p-1$. Hence, $\vert a \vert_p = 1$, and since $\gcd(p,q)=1$, $\vert q \vert_p = 1$, so that $\vert q/a \vert_p = 1$, i.e. $q/a$ is a $p$-adic integer of norm $1$. Hence, for each $a$, we may write $q/a = b + p r_1$, where $r_1$ is a $p$-adic integer. Now, inversion modulo $p$ is a permutation of $\{1,2, \dots, p-1\}$, and so is subsequent multiplication by $q$, since $\gcd(p,q) = 1$. Absorbing the terms $p r_i$ into $p^{k+1}\mathbb{Z}_p$, it follows that
	\[
	\bigcup_{a=1}^{p-1} \frac{q}{a}p^k + p^{k+1}\mathbb{Z}_p = \bigcup_{b=1}^{p-1} bp^k + p^{k+1}\mathbb{Z}_p = p^k\mathbb{Z}_p \setminus p^{k+1} \mathbb{Z}_p.
	\]
	Since this is true for all infinitely many primes $q>p$, we thus have 
		\[
		\mathcal{B}^p(\psi_k) \supseteq p^k\mathbb{Z}_p \setminus p^{k+1} \mathbb{Z}_p.
		\]
	
	This provides us with a countable family of possible values for the measure. To get to an uncountable number and hence to the proof of Theorem \ref{thm:main3}, we let $x = (x_k)_{k=1}^\infty$ be a sequence with $x_k \in \{0,1\}$ for all $k$. Now, define 
	\begin{equation*}
		\psi^x(n) = 
		\begin{cases}
			n/p^{\nu_p(n) + 1} & \text{if $p \vert n$ and $x_{\nu_p(n)} = 1$}, \\
			0 & \text{otherwise.}
		\end{cases}
	\end{equation*}
	We calculate the measure of the sets $\mathcal{B}^p(\psi^x)$.
	
	Note first that 
	\[
	\psi^x(n) = \max_{\substack{k \in \mathbb{N} \\ x_k = 1}} \psi_k(n),
	\]
	where $\psi_k$ is given by \eqref{eq:psi_k}. Hence, if $x_k = 1$, $\mathcal{B}^p(\psi_k) \subseteq \mathcal{B}^p(\psi^x)$, so that
	\[
	\mathcal{B}^p(\psi^x) \supseteq \bigcup_{\substack{k \in \mathbb{N} \\ x_k = 1}} \mathcal{B}^p(\psi_k) = \bigcup_{\substack{k \in \mathbb{N} \\ x_k = 1}} p^k\mathbb{Z}_p \setminus p^{k+1} \mathbb{Z}_p.
	\]
	We now prove the converse inclusion. 
	
	Let $\alpha \in \mathbb{Z}_p$ with $\alpha \notin \bigcup_{\substack{k \in \mathbb{N} \\ x_k = 1}} p^k\mathbb{Z}_p \setminus p^{k+1} \mathbb{Z}_p$. Let $n \in \mathbb{N}$ with $\psi^x(n) > 0$. Then, 
	\[
	\mathcal{B}_n^p (\psi^x) = \mathcal{B}^p_n(\psi_{\nu_p(n)}) \subseteq p^{\nu_p(n)}\mathbb{Z}_p \setminus p^{\nu_p(n)+1}\mathbb{Z}_p,
	\]
	by \eqref{eq:counterex}. Since $\psi^x(n) > 0$ only if $x_{\nu_p(n)} = 1$, we immediately find that $\alpha \notin p^{\nu_p(n)}\mathbb{Z}_p \setminus p^{\nu_p(n)+1}\mathbb{Z}_p$ for any $n \in \mathbb{N}$, whence $\alpha \notin \mathcal{B}^p(\psi^x)$. Consequently, we have shown that 
	\[
	\mathcal{B}^p(\psi^x) = \bigcup_{\substack{k \in \mathbb{N} \\ x_k = 1}} p^k\mathbb{Z}_p \setminus p^{k+1} \mathbb{Z}_p.
	\]
	
	The sets on the right hand side are clearly disjoint, with each having Haar measure $(p-1)p^{-k-1}$, so we finally arrive at 
	\[
	\mu_p(\mathcal{B}^p(\psi^x)) = \bigcup_{\substack{k \in \mathbb{N} \\ x_k = 1}} \mu_p(p^k\mathbb{Z}_p \setminus p^{k+1} \mathbb{Z}_p) = \sum_{k=1}^\infty x_k (p-1) p^{-k-1}.
	\]
	These series clearly attain an uncountable number of values. Furthermore, if $p=2$, every real number  $y \in [0, 1/2]$ can be attained by choosing the $x_k$ to be the coefficients in a binary expansion of $y$. This completes the proof of Theorem \ref{thm:main3}. \qed
	
	The functions constructed above take their inspiration in a function constructed by Haynes in  \cite{MR2576282}. We will now explain the relation between the two constructions. We have constructed functions $\psi_k$ as building blocks satisfying $\mathcal{B}^p(\psi_k)=p^k\mathbb{Z}_p\setminus p^{k+1}\mathbb{Z}_p$ and
	\begin{align*}
		\mathcal{B}^p\left(\sum_{j=1}^{\infty}\psi_{k_j}\right) &= \bigcup p^{k}\mathbb{Z}_p\setminus p^{k+1}\mathbb{Z}_p
		= \bigcup \mathcal{B}^p (\psi_{k_j})
	\end{align*}
	where $k_j$ denotes an increasing sequence of positive integers.
	The construction of $\psi_k$ was initially inspired by a function $\psi'$ from \cite{MR2576282}, so that $\psi'(n)$ is $n/p$ when $p\mid n$ and $0$ when $p\nmid n$.
	For this function, Haynes shows that $\mathcal{B}^p(\psi')=\mathcal{B}^p_{n}(\psi')=p\mathbb{Z}_p$ for $p\mid n$. Of course, putting $\psi'_k(n)$ equal to $n/p^k$ when $p^k\mid n$ and equal to 0 when $p^k\nmid n$, one easily modifies Haynes' argument to see that $\mathcal{B}^p_{n}(\psi'_k)=p^{k}\mathbb{Z}_p$ for $p^k \mid n$ and $\mathcal{B}_n^p(\psi'_k)$ is finite and disjoint for different values of $n$ otherwise, so that $\mathcal{B}^p(\psi'_k)=p^{k}\mathbb{Z}_p$.  
	
	Our functions $\psi_k$ may be in this context be viewed as refinements of $\psi'$, constructed in such a way that they will remove the ball $p^{k+1}\mathbb{Z}_p$ from the ball $p^k \mathbb{Z}_p$ in the limsup set. This is accomplished by defining  $\psi_k$ to be $\psi'_k/p$ with its support restricted to the positive integers $n\in\mathbb{N}$ with $\nu_p(n)=k+1$ Indeed, comparing $\psi_k$ and $\psi'_{k+1}$, it is easy to see that
	\begin{align*}
		\mathcal{B}^p_{n}(\psi'_k/p) &= \begin{cases}
			\mathcal{B}^p_{n}(\psi_k) = p^k \mathbb{Z}_p\setminus p^{k+1}\mathbb{Z}_p	&\text{for $p^{k+2}\nmid n$}	\\
			\mathcal{B}^p_{n}(\psi'_{k+1}) = p^{k+1}\mathbb{Z}_p &\text{for $p^{k+2}\mid n$}.
		\end{cases}
	\end{align*}
	
	\section{Concluding remarks}
	
	The spectrum of values attainable by $\mu_p(\mathcal{B}^p(\psi))$ as $\psi$ varies remains somewhat elusive. We have shown that it is contained in the set $[0,1/p] \cup \{1\}$, and that if $p=2$, it is in fact equal to this set.
	
	When $p \neq 2$, less is known. We have seen that the spectrum of values is uncountable, and appealing for instance to the classical result of Hutchinson \cite{MR625600}, we easily find that the Hausdorff dimension of the spectrum is lower bounded by $\log 2/\log p$, just by considering the functions $\psi^x$ studied above.
	
	It appears natural to conjecture that the spectrum of values is equal to  $[0,1/p] \cup \{1\}$, or -- more modestly -- that the spectrum has positive Lebesgue measure. However, at present we do not know how to improve upon the estimates of the present paper. One might hope to find further possible values of the Haar measure of $\mathcal{B}^p(\psi)$ by doing further modifications to our functions $\psi^x$, but this approach seems futile as it can be shown that $\mathcal{B}_{p^k q}^p(\psi_k / p^l) = \mathcal{B}^p(\psi_k) = p^k\mathbb{Z}_p\setminus p^{k+1}\mathbb{Z}_p$ for infinitely many primes $q$ when $l\geq 0$. To see this, follow the arguments of the proof of Theorem \ref{thm:main3} where the primes $q$ are now restricted to be greater than $p^{l+1}$, and the inclusion $\mathcal{B}_{p^k q}\supseteq p^k\mathbb{Z}_p\setminus p^{k+1}\mathbb{Z}_p$
	is shown via
	\[
	\mathcal{B}_{p^k q}\supseteq \bigcup_{\substack{1 \le a <p^{l+1} \\ \gcd(a,p) =1}} \frac{q}{a} p^{k} + p^{k+l+1}\mathbb{Z}_p.
	\]
	The restriction to primes $q$ in the argument is mainly for the sake of simplicity so that we do not have to handle divisibility with primes less than $p^{l+1}$.

	\vspace{6pt}
		\paragraph{\textbf{Acknowledgements}} We thank the referee for helpful comments.
	
	\providecommand{\bysame}{\leavevmode\hbox to3em{\hrulefill}\thinspace}
	\providecommand{\MR}{\relax\ifhmode\unskip\space\fi MR }
	% \MRhref is called by the amsart/book/proc definition of \MR.
	\providecommand{\MRhref}[2]{%
		\href{http://www.ams.org/mathscinet-getitem?mr=#1}{#2}
	}
	\providecommand{\href}[2]{#2}

\end{document}